\theoremstyle{plain}
\newtheorem{theorem}{\textbf{Theorem}}[section]
\newtheorem{proposition}[theorem]{\textbf{Proposition}}
\newtheorem{corollary}[theorem]{\textbf{Corollary}}	
\newtheorem{lemma}[theorem]{\textbf{Lemma}}
\theoremstyle{definition}
\newtheorem{remark}[theorem]{\textbf{Remark}}
\theoremstyle{plain}
\theoremstyle{plain}
\newcommand{\thistheoremname}{}
\newtheorem*{genericthm*}{\thistheoremname}
\newenvironment{namedthm*}[1]
{\renewcommand{\thistheoremname}{#1}%
	\begin{genericthm*}}
	{\end{genericthm*}}
\newenvironment{namedtheorem*}[1]
{\renewcommand{\thistheoremname}{#1}%
	\begin{genericthm*}}
	{\end{genericthm*}}
\numberwithin{equation}{section}
\newcommand{\df}{\overset{\text{def}}{=}}
\newcommand{\ZZ}{\mathbb{Z}}
\newcommand{\CC}{\mathbb{C}}
\newcommand{\OO}{\mathcal{O}}
\renewcommand{\a}{\alpha}
\newcommand{\bigosum}{\bigoplus}
\newcommand{\Pic}{\operatorname{\Pic}}
\newcommand{\Res}{\operatorname{\Res}}
\newcommand{\lra}{\longrightarrow}
\newcommand*{\lhra}{\ensuremath{\lhook\joinrel\relbar\joinrel\rightarrow}}
\title{On the Prym map for cyclic covers of genus two curves}
\author{Daniele Agostini}
\address{H\"umboldt-Universit\"at zu Berlin, Institut f\"ur Mathematik, Unter den Linden 6, 10099 Berlin}
\curraddr{Universit\"at T\"ubingen, Geschwister-Scholl-Platz, 72072 T\"ubingen}
\email{daniele.agostini@uni-tuebingen.de}
\begin{document}

\begin{abstract}
	The Prym map assigns to each covering of curves a polarized abelian variety. In the case of {\'e}tale cyclic covers of curves of genus two, we show that the Prym map is ramified precisely on the locus of bielliptic covers. The key observation is that we can naturally associate to such a cover an abelian surface with a cyclic polarization, and then the codifferential of the Prym map can be interpreted in terms of multiplication of sections on the abelian surface. Furthermore, we give a different proof of a result by Ramanan that a genus two cyclic cover of degree sufficiently high is never hyperelliptic.
\end{abstract}

\maketitle

\section{Introduction}\label{sectionprym}

In this short note we study Prym varieties associated to cyclic covers of genus two curves. Recall that, if $f\colon C \lra D$ is finite cover of smooth projective curves, we define the associated \textit{Prym variety} as the principal connected component of the kernel of the induced norm map:
\begin{equation}
\operatorname{Prym}(C\to D): \df \operatorname{Ker}^0\left[ \operatorname{Nm}(f)\colon \operatorname{Pic}^0(C) \lra \operatorname{Pic}^0(D) \right].
\end{equation}
Prym varieties are a classic subject of algebraic geometry, and they have been intensely studied, especially in the case of {\'e}tale double covers: we refer to  \cite[Chapter 12]{BirkenhakeLange2004}, \cite{BeauvillePrymSurvey1989} and \cite{FarkasPrym2012} for an overview of the topic. 

In recent years, there has been an intense work on Prym varieties \cite{MarcucciPirola2012,MarcucciNaranjo2014,ColomboetalShimura2019,NaranjoOrtegaVerra2019}. In particular, Lange and Ortega \cite{Ortega2003,LangeOrtegaPrymCyclic2010,LangeOrtegaPrym72016},\cite{LangeOrtegaPrymHyp2018} have studied Prym varieties associated to cyclic {\'e}tale covers of genus two curves. More precisely, let $C \to D$ be a cyclic {\'e}tale cover of degree $d$ of a genus two curve. Then the corresponding Prym variety has a natural polarization, obtained as the restriction of the natural principal polarization on the Jacobian of $C$. It turns out that the type $\delta$ of the polarization depends only on the degree $d$ of the cover, hence we get a \textit{Prym map}
\begin{equation}\label{eq:prymmap}
\operatorname{Pr}\colon \mathcal{R}_{2,d} \lra \mathcal{A}_{\delta}, \qquad [C\to D] \mapsto [\operatorname{Prym}(C\to D)]
\end{equation}
from the moduli space of {\'e}tale cyclic covers of degree $d$ of a genus two curve, to the moduli space of abelian varieties with a polarization of type $\delta$. 

In particular, Lange an Ortega proved in \cite{LangeOrtegaPrymCyclic2010} that the differential of the Prym map for $d=7$ is injective at a general point, so that the map is generically finite onto its image. Here, we use syzygies of abelian surfaces to extend this result to $d\geq 6$ and we moreover characterize the covers where the differential of the Prym map is not injective.

\begin{namedtheorem*}{Theorem A}
	The differential of the Prym map $\operatorname{Pr}\colon \mathcal{R}_{2,d} \lra \mathcal{A}_{\delta}$ is injective at a cyclic cover in $\mathcal{R}_{2,d}$ if and only if $d\geq 6$ and the cover is not bielliptic. In particular, the Prym map is generically finite onto its image for $d\geq 6$.	
\end{namedtheorem*}  

Recall that a cover $f\colon C \to D$ is said to be \emph{bielliptic} if  there exist compatible bielliptic quotients of $C$ and $D$. This means that there is a commutative diagram
	\begin{equation}
	\begin{tikzcd}
	C \arrow{r}{2:1}\arrow{d}{f} & E \arrow{d}{} \\
	D \arrow{r}{2:1} & F
	\end{tikzcd}
	\end{equation}
where $E$ and $F$ are elliptic curves and $C\to E$ and $D\to F$ are double covers.	In particular, a straightforward parameter count shows that the locus of bielliptic covers is a divisor in the moduli space $\mathcal{R}_{2,d}$ and then the theorem shows that the Prym map is generically finite onto its image whenever $d\geq 6$. 

In the course of the proof of Theorem A, we will also show that the covering curve is never hyperelliptic, whenever $d\geq 7$. This result was already observed  by Ramanan in \cite[Section 3]{RamananAmpleAbSurf1985} for cyclic covers of degrees $d\geq 3$, but we give here a proof along different lines. I thank Pawe{\l} Bor\'owka for pointing me to Ramanan's result and also for pointing out the similar result in \cite[Proposition 2.3]{Borowka} for cyclic covers of curves of higher degree.

\begin{namedtheorem*}{Theorem B}
	Let $C\to D$ be a cyclic cover of a genus two curve of degree $d\geq 7$. Then the curve $C$ is not hyperelliptic.	
\end{namedtheorem*}

To prove both results, we first describe a construction that associates to a cyclic cover $[C\to D] \in \mathcal{R}_{2,d}$ a polarized abelian surface $(A,L)$ of type $(1,d)$. Then, we show that the codifferential of the Prym map at $[C\to D]$ can be interpreted via the multiplication map $\operatorname{Sym}^2H^0(A,L) \to H^0(A,L^{2})$. We then use results of Lazarsfeld \cite{LazarsfeldProjective1990} and Fuentes Garc\'ia \cite{GarciaProjective2004}, which show that this multiplication map is surjective if and only if $d\geq 7$ and the corresponding polarization is very ample, and then a result of Ramanan \cite{RamananAmpleAbSurf1985} proves that this is the case if and only if the cover is not bielliptic. A similar argument yields the case $d=6$ of Theorem A, and to conclude we analyze the case of bielliptic covers.

\vspace{10pt}
\textbf{Acknowledgements}: I would like to thank the anonymous referee, whose comment led us to a correction of the original argument and to a strenghtening of our results.
The main ideas in this note were developed during my PhD at the Humboldt-Universit\"at zu Berlin. I would like to thank my advisor Prof.  Gavril Farkas for his guidance and the Berlin Mathematical School and the IRTG 1800 of the DFG for their  support during my studies. I would also like to thank Angela Ortega for her useful comments on this manuscript. I thank Pawe{\l} Bor\'owka for pointing out Ramanan's result to me.

\section{Background}

\subsection{General facts on cyclic covers}
We collect here some facts about {\'e}tale cyclic covers of smooth varieties. Let $X$ be a smooth quasiprojective variety, together with a free action of the cyclic group $\ZZ/d\ZZ$. Since the action is free, the quotient $Y$ is again a smooth and irreducible projective variety, and the quotient map $f\colon X\lra Y$ is finite and {\'e}tale of degree $d$ \cite[Theorem p.66]{MumfordAbelianBook1974}. Such a map $f\colon X \lra Y$ is called a \textit{cyclic {\'e}tale cover of degree $d$}. 

Furthermore the pushforward $f_*\OO_X$ has a structure of $\OO_Y$-algebra that decomposes according to the irreducible representations of $\ZZ/d\ZZ$: this decomposition has the form  
\begin{equation}
f_*\OO_X \cong \bigoplus_{i=0}^{d-1} \eta^{-i}
\end{equation} 
where $\eta\in \operatorname{Pic}(Y)$ is a $d$-torsion line bundle, meaning that there is an isomorphism $\eta^{d} \to \OO_Y$.  We note that the $\OO_Y$-algebra structure on $f_*\OO_X$ gives us one specific isomorphism $\varphi\colon \eta^{d} \to \OO_Y$. 

Conversely, take a $d$-torsion line bundle $\eta$ on a smooth quasiprojective variety $Y$. If we fix an isomorphism $\varphi\colon \eta^d \to \OO_Y$, we can endow the sheaf $\bigoplus_{i=0}^{d-1}\eta^{-i}$ with the structure of a $\OO_Y$-algebra, and it is easy to see that
\begin{equation}\label{pushforwardcycliccovers} 
\operatorname{Spec} \bigoplus_{i=0}^{d-1} \eta^{-i} \lra Y
\end{equation} 
is a cyclic {\'e}tale cover of degree $d$. Hence, there is a correspondence between cyclic {\'e}tale covers of degree $d$ over $Y$ and $d$-torsion line bundles $\eta$, together with an isomorphism $\eta^d \to \OO_Y$. Moreover, if we choose two different isomorphisms $\eta^d \to \OO_Y$, it is easy to see that the corresponding cyclic covers of $Y$ are isomorphic.

\begin{remark}\label{rmk:projectionmap}
Let now $f\colon X\to Y$ be a cyclic cover induced by a torsion line bundle $\eta$ of order $d$ on $Y$. If $\mathcal{L}$ is any line bundle on $Y$, the projection formula yields a decomposition $f^*f_*\mathcal{L} \cong \oplus_{i=0}^{d-1} \mathcal{L}\otimes \eta^{-i}$, which corresponds to the decomposition of $f_*f^*\mathcal{L}$ according to the action of $\mathbb{Z}/d\mathbb{Z}$. In particular, taking $(\mathbb{Z}/d\mathbb{Z})$-invariants we have a projection $(-)^{(\mathbb{Z}/d\mathbb{Z})}\colon f_*f^*\mathcal{L}\to \mathcal{L}$, of which the usual inclusion map $\mathcal{L} \hookrightarrow f_*f^*\mathcal{L}$ is a section. We observe that the same decomposition carries over to the global sections. 
\end{remark} 

We will need later the following well-known lemma, of which we give a proof for completeness.

\begin{lemma}\label{kernelpullback} 
Suppose that $Y$ is projective and connected and let $f\colon X \to Y$ be an {\'e}tale cyclic cover given by a $d$-torsion line bundle $\eta$. Then, the kernel of the pullback map $f^*\colon \operatorname{Pic}(Y) \to \operatorname{Pic}(X)$ is precisely the subgroup generated by $\eta$.
\end{lemma}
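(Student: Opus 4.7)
I would first verify $f^{*}\eta\cong\mathcal{O}_X$, so that $\langle\eta\rangle\subseteq\ker f^{*}$. This is essentially built into the construction of $X$ as $\Spec_Y\bigoplus_{i=0}^{d-1}\eta^{-i}$: the inclusion of the degree-one piece $\eta^{-1}\hookrightarrow f_{*}\mathcal{O}_X$ corresponds by adjunction to a morphism $f^{*}\eta^{-1}\to\mathcal{O}_X$. Locally, where $\eta$ is trivialized and the algebra becomes $\mathcal{O}_Y[x]/(x^{d}-u)$ for some unit $u$, the generator $x$ is invertible in this algebra (with inverse $u^{-1}x^{d-1}$), so the adjoint map is nowhere vanishing and defines an isomorphism $f^{*}\eta^{-1}\cong\mathcal{O}_X$.

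\textbf{Reverse inclusion.} Given $\mathcal{L}\in\ker f^{*}$, the projection formula combined with $f^{*}\mathcal{L}\cong\mathcal{O}_X$ produces an isomorphism of $\mathcal{O}_Y$-modules
\begin{equation*}
\bigoplus_{i=0}^{d-1}\mathcal{L}\otimes\eta^{-i}\;\cong\;f_{*}f^{*}\mathcal{L}\;\cong\;f_{*}\mathcal{O}_X\;\cong\;\bigoplus_{i=0}^{d-1}\eta^{-i}.
\end{equation*}
Taking global sections, the right-hand side has total dimension one, since a nontrivial torsion line bundle on a projective connected variety has no sections. Hence there is a unique $i_{0}\in\{0,\dots,d-1\}$ with $H^{0}(\mathcal{L}\otimes\eta^{-i_{0}})\neq 0$, and I fix a nonzero section $s$ there. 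Applying the same reasoning to $\mathcal{L}^{-1}\in\ker f^{*}$ yields a nonzero $t\in H^{0}(\mathcal{L}^{-1}\otimes\eta^{-j_{0}})$ for some $j_{0}$.

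\textbf{Concluding.} The product $s\otimes t\in H^{0}(\eta^{-(i_{0}+j_{0})})$ is nonzero because on an integral scheme the tensor product of nonzero sections of line bundles stays nonzero. Triviality of torsion bundles admitting sections then forces $\eta^{-(i_{0}+j_{0})}\cong\mathcal{O}_Y$, so $s\otimes t$ is a nowhere-vanishing constant in $H^{0}(\mathcal{O}_Y)$. In particular $s$ itself is nowhere vanishing, giving an isomorphism $\mathcal{O}_Y\xrightarrow{\sim}\mathcal{L}\otimes\eta^{-i_{0}}$, i.e.\ $\mathcal{L}\cong\eta^{i_{0}}\in\langle\eta\rangle$. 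The main subtlety is that the displayed direct-sum isomorphism need not respect the two decompositions, so one cannot simply read off $\mathcal{L}$ by projecting onto a summand; the work is done instead by the $h^{0}$-count, which isolates a unique $i_{0}$, and by the multiplicativity trick with $\mathcal{L}^{-1}$, which upgrades ``admits a section'' to ``is trivial''.
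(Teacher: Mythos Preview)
Your argument is correct and shares the projection-formula backbone with the paper's, but the key extraction step differs. To deduce $\mathcal{L}\cong\eta^{i_0}$ from the isomorphism $\bigoplus_i\mathcal{L}\otimes\eta^{-i}\cong\bigoplus_i\eta^{-i}$, the paper simply invokes the Krull--Schmidt theorem for coherent sheaves on a projective variety (Atiyah), which forces a summand-by-summand match in one stroke; you instead bypass this black box via the $h^0$-count and the section-pairing trick with $\mathcal{L}^{-1}$, which is more elementary and entirely self-contained. Amusingly, the two proofs swap techniques on the easy inclusion $f^*\eta\cong\mathcal{O}_X$: the paper uses exactly your pairing idea there (obtain nonzero sections of $f^*\eta$ and $f^*\eta^{-1}$ from the pushforward decomposition, then compose $\mathcal{O}_X\to f^*\eta\to\mathcal{O}_X$), whereas you argue directly from the local description of the $\mathcal{O}_Y$-algebra. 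One small caveat: your assertion that $\bigoplus_i H^0(Y,\eta^{-i})$ is one-dimensional tacitly assumes $\eta$ has order \emph{exactly} $d$, while the lemma as stated only requires $\eta$ to be $d$-torsion; the conclusion is unaffected (all indices $i_0$ with a nonzero section give the same line bundle $\mathcal{L}\otimes\eta^{-i_0}$), but the word ``unique'' should be relaxed accordingly.
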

\begin{proof}
Suppose that $\mathcal{L}$ is a line bundle on $Y$ such that	$f^*\mathcal{L} \cong \OO_X$. The projection formula gives that $\mathcal{L}\otimes f_*\OO_X \cong f_*\OO_X$, so that
\begin{equation}
\bigoplus_{i=0}^{d-1}\mathcal{L}\otimes \eta^{-i} \cong \bigoplus_{i=0}^{d-1} \eta^{-i}.
\end{equation}
Then, by the Krull-Schmidt theorem \cite[Theorem 1,Theorem 3]{AtiyahKrullSchmidt1956}, it follows that $\mathcal{L} \cong \eta^{-i}$, for a certain $i$. For the converse, we need to prove that $f^*\eta \cong \OO_X$. However, the previous reasoning shows that $h^0(X,f^*\eta)\ne 0$ and $h^0(X,f^*\eta^{-1}) \ne 0$: this way we get two injective maps $\OO_X\to f^*\eta \to \OO_X$, and since the composition is an isomorphism, it follows that both of them are isomorphisms as well.
\end{proof}

\subsection{Cyclic covers of curves and the Prym map}
We can specialize the previous discussion to smooth curves: let $D$ be a smooth and irreducible curve of genus $g$. Then, by what we have remarked before, isomorphism classes of {\'e}tale cyclic covers of degree $d$ of $D$ correspond to $d$-torsion line bundles $\eta \in \operatorname{Pic}^0(D)$. 
Moreover, it is easy to see that a cover $f\colon C\to D$ corresponding to $\eta$ is connected if and only if $\eta$ has order precisely $d$.

From this discussion, we have a \textit{moduli space of {\'e}tale cyclic covers of degree $d$} as the space $\mathcal{R}_{g,d}$ of isomorphism classes $[D,\eta]$, where $D$ is a smooth curve of genus $g$ and $\eta \in \operatorname{Pic}^0(D)$ is a torsion bundle of order $d$. Such a couple $(D,\eta)$ is sometimes called also a \textit{level curve} of order $d$.

\begin{remark}
	The space $\mathcal{R}_{g,d}$ is irreducible \cite{BernsteinThesis1999} and since each curve has a finite number of torsion line bundles, we see that $\dim \mathcal{R}_{g,d} = \dim \mathcal{M}_{g} = 3g-3$. 
\end{remark}

At this point, consider a level curve $[D,\eta]\in \mathcal{R}_{g,d}$ and let $f\colon C\to D$ be its corresponding cyclic cover. Then we can take the Prym variety of this cover, that we denote by $\operatorname{Prym}(D,\eta)$. This is a polarized abelian variety of type \cite[Corollary 12.1.5, Lemma 12.3.1]{BirkenhakeLange2004}
\begin{equation}
\label{type}
	\delta = (1,1,1,\dots,1,d,d,d,\dots,d)
\end{equation}
where $1$ is repeated $(d-2)(g-1)$ times and $d$ is repeated $g-1$ times.

Let us denote by $\mathcal{A}_{\delta}$ the moduli space of abelian varieties with a polarization of the type \eqref{type}. Then the Prym construction gives a map of moduli spaces, called the \emph{Prym map}:

	\begin{equation}
	\operatorname{Pr}_{g,d} \colon \mathcal{R}_{g,d} \lra \mathcal{A}_{\delta}, \qquad [D,\eta] \mapsto [\operatorname{Prym(D,\eta)}].
	\end{equation}

Lange and Ortega have considered in \cite{LangeOrtegaPrymCyclic2010,LangeOrtegaPrym72016} the differential of the Prym map for cyclic covers and they have proved that it is very often injective. In particular, it follows that the Prym map is generically finite. 

Here we want to describe this differential, following \cite{LangeOrtegaPrymCyclic2010}.  Consider again a level curve $[D,\eta] \in \mathcal{R}_{g,d}$ and let $f\colon C \to D$ be a corresponding cyclic cover. Since the cover is {\'e}tale, we have $f^*\omega_D \cong \omega_C$, and the projection formula together with \eqref{pushforwardcycliccovers} gives
\begin{equation}
f_*\omega_C \cong \omega_D \otimes f_*\OO_C \cong \bigoplus_{i=0}^{d-1} \omega_D\otimes \eta^{-i}.
\end{equation}
Taking global sections, we get that
\begin{equation}\label{decompositionomegaC}
H^0(C,\omega_C) = \bigoplus_{i=0}^{d-1} H^0(D,\omega_D\otimes \eta^{-i})
\end{equation}
and this is exactly the decomposition of $H^0(C,\omega_C)$ into $(\ZZ/d\ZZ)$-representations. We single out the non-trivial representations and we set
\begin{equation}\label{definitionW} 
W:\df \bigoplus_{i=1}^{d-1} H^0(D,\omega_D\otimes \eta^{-i}) \subseteq H^0(C,\omega_C).
\end{equation}
With this, we can state the result about the differential of the Prym map.

\begin{proposition}[Lange-Ortega]\label{differentialprym} 
	With the above notation, the dual of the differential of the Prym map at $[D,\eta] \in\mathcal{R}_{g,d}$ is the composition
	\begin{equation}
	\operatorname{Sym}^2 W \overset{m}{\lra} H^0(C,\omega_C^{2}) \overset{(-)^{\mathbb{Z}/d\mathbb{Z}}}{\lra} H^0(D,\omega_D^2),
	\end{equation} 
	where the first map is the multiplication of sections, and the second map is the projection to the $(\mathbb{Z}/d\mathbb{Z})$-invariant part.	
\end{proposition}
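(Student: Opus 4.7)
The plan is to factor the codifferential of the Prym map through the codifferential of the Torelli/Jacobian map $\mathcal{J}\colon\mathcal{M}_{g'}\to\mathcal{A}_{g'}$ (where $g'=g(C)=1+d(g-1)$) and then invoke the classical formula expressing the latter as multiplication of sections.

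First I would identify the relevant cotangent spaces. Since the forgetful map $\mathcal{R}_{g,d}\to\mathcal{M}_g$ is étale at $[D,\eta]$, Serre duality gives $T^*_{[D,\eta]}\mathcal{R}_{g,d}\cong H^0(D,\omega_D^2)$. For the target we have $T^*_{[P]}\mathcal{A}_\delta\cong\operatorname{Sym}^2 H^0(P,\Omega^1_P)$, and the key identification $H^0(P,\Omega^1_P)\cong W$ follows from the description of $P$ as the connected component of $\ker\operatorname{Nm}(f)$: the codifferential of $\operatorname{Nm}(f)$ at the origin is the injective pullback $f^*\colon H^0(D,\omega_D)\hookrightarrow H^0(C,\omega_C)$, whose image is precisely the trivial isotypic summand in \eqref{decompositionomegaC}, so $H^0(P,\Omega^1_P)\cong H^0(C,\omega_C)/f^*H^0(D,\omega_D)$, which via the $\ZZ/d\ZZ$-decomposition is canonically isomorphic to $W$.

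Next I would set up a commutative square relating the Prym and Torelli maps. Consider the forgetful map $\pi\colon\mathcal{R}_{g,d}\to\mathcal{M}_{g'}$, $[D,\eta]\mapsto[C]$, whose differential identifies $T_{[D,\eta]}\mathcal{R}_{g,d}$ with the subspace $H^1(C,T_C)^{\ZZ/d\ZZ}$ of equivariant first-order deformations of $C$; dually, $d\pi^*\colon H^0(C,\omega_C^2)\to H^0(D,\omega_D^2)$ is the projection onto the $\ZZ/d\ZZ$-invariants. On the other side, the isogeny decomposition $J(C)\sim f^*J(D)\times P$ gives a natural projection $T_{[J(C)]}\mathcal{A}_{g'}\to T_{[P]}\mathcal{A}_\delta$ whose codifferential is the inclusion $\operatorname{Sym}^2 W\hookrightarrow\operatorname{Sym}^2 H^0(C,\omega_C)$ induced by $W\hookrightarrow H^0(C,\omega_C)$ (as the sum of non-trivial isotypic components). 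A deformation-theoretic check, essentially amounting to the fact that an equivariant deformation of $C$ deforms $J(C)$ and carries its Prym subvariety along with it, shows that the square
\[
\begin{tikzcd}
T_{[D,\eta]}\mathcal{R}_{g,d}\arrow[r,"d\operatorname{Pr}"]\arrow[d,"d\pi"] & T_{[P]}\mathcal{A}_\delta \\
T_{[C]}\mathcal{M}_{g'}\arrow[r,"d\mathcal{J}"] & T_{[J(C)]}\mathcal{A}_{g'}\arrow[u]
\end{tikzcd}
\]
commutes.

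Finally I would invoke the classical description of the Torelli codifferential: $d\mathcal{J}^*$ at $[C]$ is the multiplication of sections $m\colon\operatorname{Sym}^2 H^0(C,\omega_C)\to H^0(C,\omega_C^2)$. Dualizing the commutative square above then expresses $d\operatorname{Pr}^*$ as the composition
\[
\operatorname{Sym}^2 W\hookrightarrow\operatorname{Sym}^2 H^0(C,\omega_C)\xrightarrow{\;m\;}H^0(C,\omega_C^2)\xrightarrow{(-)^{\ZZ/d\ZZ}}H^0(D,\omega_D^2),
\]
which is exactly the formula in the statement. The main obstacle is the careful verification of the identification $T^*_{[P]}\mathcal{A}_\delta\cong\operatorname{Sym}^2 W$ together with the commutativity of the square, since there is no direct morphism between $\mathcal{A}_{g'}$ and $\mathcal{A}_\delta$; the comparison must be made via Kodaira--Spencer and the $\ZZ/d\ZZ$-isotypic decomposition of $H^1(C)$, keeping track of the polarization type $\delta$.
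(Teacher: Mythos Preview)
The paper does not actually prove this proposition: its entire ``proof'' is a reference to \cite[Proposition 4.1]{LangeOrtegaPrymCyclic2010}. Your outline is therefore already more detailed than what the paper offers, and it is essentially the standard argument (and, as far as one can tell, the argument Lange and Ortega themselves give): identify $T_0^*P$ with $W$ via the $\ZZ/d\ZZ$-isotypic decomposition of $H^0(C,\omega_C)$, factor the Prym map through the equivariant Torelli map for $C$, and use that the codifferential of the Torelli map is the multiplication of sections.

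One small point worth tightening in your square: the arrow $T_{[J(C)]}\mathcal{A}_{g'}\to T_{[P]}\mathcal{A}_\delta$ is not canonical from the inclusion $P\hookrightarrow J(C)$ alone---that inclusion only gives $\operatorname{Sym}^2 T_0P\hookrightarrow\operatorname{Sym}^2 T_0 J(C)$, i.e.\ a map in the opposite direction on tangent spaces. The projection you need exists precisely because of the $\ZZ/d\ZZ$-equivariant splitting $H^0(C,\omega_C)=H^0(D,\omega_D)\oplus W$, which you do invoke at the end; it would be cleaner to build the square directly in the equivariant category (deformations of $C$ with its $\ZZ/d\ZZ$-action mapping to deformations of $J(C)$ with its $\ZZ/d\ZZ$-action, and hence of the invariant/anti-invariant abelian subvarieties) rather than passing through the full $\mathcal{A}_{g'}$. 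With that adjustment your argument goes through.
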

\begin{proof}
	See \cite[Proposition 4.1]{LangeOrtegaPrymCyclic2010}.	
\end{proof}

\begin{remark}\label{rmk:invariantW}
	Since the multiplication map is $(\mathbb{Z}/d\mathbb{Z})$ equivariant, in Proposition \ref{differentialprym}, we only need to take into consideration the $(\mathbb{Z}/d\mathbb{Z})$-invariant part of $\operatorname{Sym}^2 W$. Thus, the differential of the Prym map at $[D,\eta]$ is injective if and only if the multiplication map
	\[ \left( \operatorname{Sym}^2 W \right)^{\mathbb{Z}/d\mathbb{Z}} = \bigoplus_{\substack{1\leq i \leq j \leq d-1 \\ i+j=d }} H^0(D,\omega_D\otimes \eta^i)\otimes H^0(D,\omega_D\otimes \eta^j) \lra H^0(D,\omega_D^2) \]
	is surjective.
\end{remark}

\section{The non bielliptic case}

In this section we show how to associate a polarized abelian surface to a cyclic cover of a genus two curve. We then use this abelian surface to study the differential of the Prym map and to prove Theorems A and B whenever the cover is not bielliptic.  The construction of the abelian surface is quite standard, and can be found also in \cite{BirkenhakeLange2004,RamananAmpleAbSurf1985}, but we would like to outline the necessary steps nevertheless.

Take a level curve $[D,\eta] \in\mathcal{R}_{2,d}$ and let $JD = \operatorname{Pic}^0(D)$ be the Jacobian variety of $D$. We fix a point $P_0\in D$, so that we have the corresponding Abel-Jacobi map
\begin{equation}
\a \colon D \lhra JD, \qquad P\mapsto \mathcal{O}_D(P-P_0)
\end{equation}
which realizes $D$ as a divisor on $JD$. Standard properties of the Abel-Jacobi map imply that the line bundle $M = \OO_{JD}(D)$ is a principal polarization on $JD$ \cite[Corollary 11.2.3]{BirkenhakeLange2004} and also that the pullback map
\begin{equation}
\a^* \colon \text{Pic}^0(JD) \lra \text{Pic}^0(D)
\end{equation}
is an isomorphism \cite[Lemma 11.3.1]{BirkenhakeLange2004}.  
In particular, the line bundle $\eta_{JD} :\df (\a^*)^{-1}(\eta)$ on $JD$ is again a torsion bundle of order $d$. If we choose an isomorphism $\varphi_{JD}\colon \eta_{JD}^d \to \OO_{JD}$, we can pull it back via $\alpha$ to an isomorphism $\varphi:\df \alpha^*(\varphi_{JD})\colon \eta^d \to \OO_D$. We can take the corresponding cyclic covers,
\begin{equation}
C :\df \text{Spec} \bigosum_{i=0}^{d-1}\eta^{-i}, \qquad \qquad A :\df \text{Spec} \bigosum_{i=0}^{d-1} \eta_{JD}^{-i}
\end{equation}
and we have the following:

\begin{lemma}\label{coversabelian}
With the above notation, we have a fibered square	
\begin{center}
	\begin{tikzpicture}[node distance=1.5cm, auto]
	\node(A) {$C$};
	\node(B) [right of= A] {$A$};
	\node(A') [below of = A]{$D$};
	\node(B')[right of= A']{$JD$};
	\draw[->] (A) to node {$j$} (B);
	\draw[->] (A') to node {$\a$} (B');
	\draw[->] (A) to node [left] {$f$} (A');
	\draw[->] (B) to node  {$h$} (B');
	\end{tikzpicture}
\end{center}
Moreover, $A$ is an abelian surface, the line bundle $L:\df h^*M$ is ample of type $(1,d)$ and under the embedding $j\colon C\hookrightarrow A$ the curve $C$ can be considered as a divisor $C\in |L|$.
\end{lemma}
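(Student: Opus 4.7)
The plan is to exploit the fact that both covers $f\colon C\to D$ and $h\colon A\to JD$ arise from the \emph{same} torsion data, transported along the Abel--Jacobi embedding. First, I would verify that the square is cartesian. By construction, $\alpha^*\eta_{JD}=\eta$ and $\varphi=\alpha^*\varphi_{JD}$, so pulling back the $\mathcal{O}_{JD}$-algebra $\bigoplus_{i=0}^{d-1}\eta_{JD}^{-i}$ along $\alpha$ yields the $\mathcal{O}_D$-algebra $\bigoplus_{i=0}^{d-1}\eta^{-i}$, with matching multiplication. Taking relative Spec and comparing with the definition of $C$, one reads off $C\cong D\times_{JD}A$, which gives both the square and the embedding $j\colon C\hookrightarrow A$.

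Next, I would argue that $A$ is an abelian surface. Since $h$ is \'etale of degree $d$ over the connected abelian variety $JD$, $A$ is smooth of dimension $2$. Its connectedness follows from Lemma \ref{kernelpullback}, since $\alpha^*$ is an isomorphism on $\operatorname{Pic}^0$ and therefore $\eta_{JD}$ has exact order $d$. Writing $JD=V/\Lambda$ with $V\cong\mathbb{C}^2$, the connected \'etale cover $A$ takes the form $V/\Lambda'$ for a sublattice $\Lambda'\subset\Lambda$ with cyclic quotient $\Lambda/\Lambda'\cong\mathbb{Z}/d\mathbb{Z}$; choosing a preimage of $0\in JD$ as origin makes $A$ an abelian surface and $h$ an isogeny.

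It then remains to identify $L=h^*M$ and to exhibit $C$ as an element of $|L|$. Ampleness of $L$ is immediate from the finiteness of $h$. For the polarization type $(d_1,d_2)$ with $d_1\mid d_2$, I would first compute, via the projection formula,
\[
\chi(L)=\chi\!\left(M\otimes h_*\mathcal{O}_A\right)=\sum_{i=0}^{d-1}\chi\!\left(M\otimes \eta_{JD}^{-i}\right)=d,
\]
since each $M\otimes\eta_{JD}^{-i}$ is algebraically equivalent to the principal polarization $M$. Thus $d_1d_2=d$. To rule out $d_1>1$, I would use the lattice picture: the unimodular alternating form attached to $M$ identifies $\Lambda$ with its dual, which produces an injection of the cyclic group $\Lambda/\Lambda'$ into $(\Lambda')^\vee/\Lambda'\cong K(L)\cong(\mathbb{Z}/d_1)^2\oplus(\mathbb{Z}/d_2)^2$. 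The presence of an element of order $d$ forces $d_2=d$ and $d_1=1$. Finally, the cartesian square gives $j(C)=h^{-1}(D)$ scheme-theoretically, so pulling back the section of $M$ cutting out $D$ yields a section of $L$ whose zero locus is exactly $j(C)$, and thus $C\in|L|$. The main obstacle is the type computation: the equality $d_1 d_2 = d$ is a routine Riemann--Roch calculation, but pinning down $d_1=1$ really uses the cyclicity of the Galois group of $h$.
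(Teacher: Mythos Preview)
Your proof is correct, and the argument for the fibered square and for $C\in|L|$ matches the paper's exactly. Where you diverge is in the remaining two steps. For the abelian-surface structure, the paper invokes the Serre--Lang theorem (a connected finite \'etale cover of an abelian variety is again an abelian variety), whereas you use the complex-analytic uniformization $JD=V/\Lambda$ and describe $A$ as $V/\Lambda'$; both are fine, but the Serre--Lang route works uniformly in the algebraic category, while yours is tied to $\mathbb{C}$. For the polarization type, the paper quotes \cite[Lemma~3.1.5, Proposition~2.4.3]{BirkenhakeLange2004} to reduce to showing $\operatorname{Ker} h\cong\ZZ/d\ZZ$, and then identifies $\operatorname{Ker} h^*$ with the cyclic group $\langle\eta_{JD}\rangle$ via Lemma~\ref{kernelpullback}. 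Your route---computing $\chi(L)=d$ by the projection formula and then embedding $\Lambda/\Lambda'\hookrightarrow K(L)$ via the unimodularity of the principal polarization---is essentially a hands-on unpacking of those Birkenhake--Lange lemmas in the lattice model, and is arguably more self-contained. One small quibble: your citation of Lemma~\ref{kernelpullback} for the connectedness of $A$ is misplaced; what you actually use there is that $\alpha^*$ is an isomorphism on $\operatorname{Pic}^0$ (so $\eta_{JD}$ has exact order $d$), which is stated just before the lemma and comes from \cite[Lemma~11.3.1]{BirkenhakeLange2004}.
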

\begin{proof}
By construction we have an isomorphism $\alpha^*\left( \bigoplus_{i=0}^{d-1}\eta_{JD}^{-i} \right) \cong \bigoplus_{i=0}^{d-1}\eta^{-i}$ of sheaves of $\OO_D$-algebras. Hence, we get a fibered square as above from the properties of the relative Spec. To see that $A$ is an abelian surface, one observes first that it is connected, since $\eta_{JD}$ has exactly order $d$, and then $h^0(A,\OO_A) = \sum_{i=0}^{d-1}h^0(JD,\eta_{JD}^{-i}) = 1$.  Since $h\colon A \lra JD$ is an {\'e}tale finite map, and $A$ is connected, it follows from the Serre-Lang theorem \cite[Theorem IV.18]{MumfordAbelianBook1974} that $A$ is an abelian surface and that the map $F$ is an isogeny.  

To conclude, we need to prove that $L$ is ample and of type $(1,d)$. Since the map $h$ is finite and $M$ is ample, it follows from \cite[Proposition 1.2.13]{LazarsfeldPositivityI2004} that $L=h^*M$ is ample. For the type, it is enough to prove that $\operatorname{Ker} h\cong \ZZ/d\ZZ$ \cite[Lemma 3.1.5]{BirkenhakeLange2004}. Thanks to \cite[Proposition 2.4.3]{BirkenhakeLange2004}, it is enough to show the same for $\operatorname{Ker} h^*$: however we know from Lemma \ref{kernelpullback}, that $\operatorname{Ker} h^*$ is precisely the subgroup generated by $\eta$, so that  $\operatorname{Ker} h^* \cong \ZZ/d\ZZ$.
\end{proof}

Recall from \eqref{decompositionomegaC} that we have a decomposition $H^0(C,\omega_C) = \bigoplus_{i=0}^{d-1} H^0(D,\omega_D\otimes \eta^{-i})$ into $(\ZZ/d\ZZ)$-representations. We have defined in \eqref{definitionW} the linear system $W=\bigoplus_{i=1}^{d-1} H^0(D,\omega_D\otimes \eta^{-i})$ and now we want to give an interpretation of it in terms of the abelian surface $A$.

\begin{lemma}\label{Wasrestriction}
With notations as before, $W$ coincides with the image of the restriction map from $H^0(A,L)$ to $H^0(C,\omega_C)$:
\begin{equation}
W = \operatorname{Im}\left( H^0(A,L) \lra H^0(C,\omega_C) \right).
\end{equation}	
\end{lemma}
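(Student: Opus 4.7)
The plan is to identify the dimension of the image of the restriction map and then use $\ZZ/d\ZZ$-equivariance to place the image inside $W$.

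First, since $\omega_A \cong \OO_A$ and $C \in |L|$, adjunction gives $\omega_C \cong L|_C$, and the short exact sequence
\[
0 \lra \OO_A \lra L \lra \omega_C \lra 0
\]
shows that the restriction $H^0(A,L) \lra H^0(C,\omega_C)$ has a one-dimensional kernel, spanned by the section of $L$ cutting out $C$. Because $L$ is ample of type $(1,d)$, Riemann--Roch on the abelian surface yields $h^0(A,L) = \chi(L) = d$, so the image of the restriction has dimension $d-1$. A quick Riemann--Roch on $D$ shows that $\dim W = d-1$ as well, so it is enough to exhibit an inclusion.

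Next, the fibered square of Lemma \ref{coversabelian} is $\ZZ/d\ZZ$-equivariant, so the restriction map is equivariant too. Applying the projection formula to the {\'e}tale cover $h$, we get the isotypical decomposition
\[
H^0(A,L) \;\cong\; \bigoplus_{i=0}^{d-1} H^0(JD,\, M \otimes \eta_{JD}^{-i}),
\]
in which the $(\ZZ/d\ZZ)$-invariant summand ($i=0$) is the one-dimensional subspace $h^*H^0(JD,M)$. If $s_D \in H^0(JD, M)$ is the section cutting out $D \subset JD$, then $h^*s_D$ is precisely the section cutting out $h^{-1}(D) = C$ on $A$, so $h^*s_D$ restricts to zero on $C$. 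Hence the invariant part of $H^0(A,L)$ is contained in the kernel of the restriction.

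Finally, comparing with the $\ZZ/d\ZZ$-decomposition $H^0(C,\omega_C) = H^0(D,\omega_D) \oplus W$ from \eqref{decompositionomegaC} into invariant and non-invariant parts, equivariance forces the image of the restriction to be contained in $W$, and the matching dimensions give the desired equality.

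The only delicate point is identifying the section of $L$ cutting out $C$ as $h^*s_D$; this is immediate from the fiber square, so the whole argument reduces to bookkeeping with projection formulas.
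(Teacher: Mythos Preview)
Your proof is correct and follows essentially the same strategy as the paper's: both identify the section of $L$ cutting out $C$ as $h^*s_D$, use $\ZZ/d\ZZ$-equivariance to place the image of the restriction inside $W$, and conclude by matching dimensions. The paper packages the equivariance argument into a commutative diagram obtained by pushing forward along $h$ and taking invariants, whereas you argue directly from the isotypical decomposition of $H^0(A,L)$; the content is the same.
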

\begin{proof}
First we observe that the restriction map makes sense, since $C\in |L|$ by Lemma \ref{coversabelian}, so that the adjunction formula gives $\omega_C \cong \omega_A\otimes L_{|C} \cong L_{|C}$. Let $\tau \in H^0(JD,M)$ be a section such that $D = \{ \tau = 0 \}$. Using again the adjunction formula, we have an exact sequence of sheaves on $JD$
\begin{equation}
0 \lra \OO_{JD} \overset{\cdot \tau}{\lra} M \lra \omega_D \lra 0
\end{equation}
and Lemma \ref{coversabelian} shows that, pulling back via $h^*$, we get an exact sequence
\begin{equation} 
0 \lra \OO_A \overset{\cdot \sigma}{\lra} L \lra \omega_C \lra 0
\end{equation}
where $\sigma := h^*(\tau) \in H^0(A,L)$. By construction, we see that this is actually an exact sequence of sheaves, together with a $(\ZZ/d\ZZ)$-action and, moreover, if we take the pushforward along $h_*$ and then take the $(\ZZ/d\ZZ)$-invariant part as in Remark \ref{rmk:projectionmap}, we get a commutative diagram  with exact rows:
\begin{equation}
\begin{tikzcd}
0 \arrow{r}{} & h_*(\OO_A) \arrow{r}{h_*(\cdot \sigma)}\arrow{d}{(-)^{\ZZ/d\ZZ}} & h_*L \arrow{r}{}\arrow{d}{(-)^{\ZZ/d\ZZ}} & h_*\omega_C \arrow{r}{}\arrow{d}{(-)^{\ZZ/d\ZZ}} & 0 \\
0 \arrow{r}{} & \OO_{JD} \arrow{r}{\cdot \tau} & M \arrow{r}{} & \omega_D \arrow{r}{} & 0.
\end{tikzcd}
\end{equation}
Passing to global sections, we get another commutative diagram with exact rows:
\begin{equation}\label{diagramglobalsections} 
\begin{tikzcd}
0 \arrow{r}{} & \CC\sigma \arrow{r}{}\arrow{d}{(-)^{\ZZ/d\ZZ}} & H^0(A,L) \arrow{r}{}\arrow{d}{(-)^{\ZZ/d\ZZ}} & H^0(C,\omega_C) \arrow{r}{}\arrow{d}{(-)^{\ZZ/d\ZZ}} & H^1(A,\OO_A) \arrow{r}{}\arrow{d}{(-)^{\ZZ/d\ZZ}} & 0 \\
0 \arrow{r}{} & \CC\tau \arrow{r}{} & H^0(JD,M) \arrow{r}{} & H^0(D,\omega_D) \arrow{r}{} & H^1(JD,\OO_{JD}) \arrow{r}{} &  0
\end{tikzcd}
\end{equation}
Since $M$ gives a principal polarization, we have that $h^0(JD,M)=1$, so that the map $\CC\tau \to H^0(JD,M)$ is an isomorphism. Hence, the map $H^0(JD,M)\to H^0(D,\omega_D)$ is zero, and since the diagram is commutative, it follows that  
\begin{equation}\label{inclusion}
\operatorname{Im}\left( H^0(A,L) \lra H^0(C,\omega_C) \right) \subseteq W = \operatorname{Ker}\left( (-)^{\ZZ/d\ZZ}\colon H^0(C,\omega_C) \lra H^0(D,\omega_D)  \right).
\end{equation}
To conclude it is enough to show that the two spaces in \eqref{inclusion} have the same dimension. To prove this, we look again at diagram \eqref{diagramglobalsections} and we see that
\begin{small} 
\begin{align}
\dim_{\CC} \operatorname{Im}\left( H^0(A,L) \lra H^0(C,\omega_C) \right) & = h^0(C,\omega_C) - h^1(A,\OO_A) = h^0(C,\omega_C) - 2 ,\\	
\dim_{\CC} W &  = h^0(C,\omega_C) - h^0(D,\omega_D) = h^0(C,\omega_C) - 2.
\end{align}
\end{small}
\end{proof}

With this lemma, we can reinterpret the codifferential of the Prym map in Proposition \ref{differentialprym} as a multiplication map on the abelian surface $A$:

\begin{lemma}\label{differentialprym2} 
With the same notations of before, we have that 
\begin{equation}
\operatorname{Sym}^2 W \lra H^0(C,\omega_C^2) \text{ is surjective }  
\end{equation}	
if and only if
\begin{equation}
\operatorname{Sym}^2 H^0(A,L) \lra H^0(A,L^2) \text{ is surjective. }
\end{equation} 
More precisely, the cokernels of the two maps have the same dimension.
\end{lemma}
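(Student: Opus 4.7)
The plan is to deduce the lemma from a snake-lemma comparison of the two multiplication maps, linked by the section $\sigma\in H^0(A,L)$ cutting out $C$.

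First, I would recycle the short exact sequence $0 \to \OO_A \overset{\cdot\sigma}{\to} L \to \omega_C \to 0$ from the proof of Lemma \ref{Wasrestriction} and tensor it with $L$, obtaining $0 \to L \overset{\cdot\sigma}{\to} L^2 \to \omega_C^2 \to 0$ (using $\omega_C\cong L_{|C}$). Since $L$ is ample on the abelian surface $A$, we have $H^1(A,L)=0$ (e.g. by Mumford's index theorem or Kodaira vanishing), so the associated cohomology sequence collapses to a short exact sequence
\[ 0 \lra H^0(A,L) \overset{\cdot\sigma}{\lra} H^0(A,L^2) \lra H^0(C,\omega_C^2) \lra 0. \]
On the algebraic side, the top row of diagram \eqref{diagramglobalsections} identifies $W$ with $H^0(A,L)/\CC\sigma$. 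Applying $\operatorname{Sym}^2$ to this surjection gives the parallel short exact sequence
\[ 0 \lra H^0(A,L) \overset{\cdot\sigma}{\lra} \operatorname{Sym}^2 H^0(A,L) \lra \operatorname{Sym}^2 W \lra 0, \]
whose leftmost map sends $v\mapsto \sigma\cdot v$ (and is injective because $\sigma\neq 0$ in the polynomial ring $\operatorname{Sym}^{\bullet}H^0(A,L)$).

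Next I would arrange the two sequences into the commutative diagram
\begin{equation*}
\begin{tikzcd}
0 \arrow{r}{} & H^0(A,L) \arrow{r}{\cdot\sigma}\arrow{d}{\id} & \operatorname{Sym}^2 H^0(A,L) \arrow{r}{}\arrow{d}{m_A} & \operatorname{Sym}^2 W \arrow{r}{}\arrow{d}{m_C} & 0 \\
0 \arrow{r}{} & H^0(A,L) \arrow{r}{\cdot\sigma} & H^0(A,L^2) \arrow{r}{} & H^0(C,\omega_C^2) \arrow{r}{} & 0
\end{tikzcd}
\end{equation*}
where $m_A$ and $m_C$ denote the two multiplication maps. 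Commutativity of the right-hand square is just the tautology that restricting a product equals the product of restrictions, and commutativity of the left-hand square is immediate. The snake lemma, combined with the fact that the leftmost vertical map is the identity (and therefore has vanishing kernel and cokernel), yields $\ker m_A\cong \ker m_C$ and $\operatorname{Coker}m_A\cong \operatorname{Coker}m_C$, which is precisely the content of the lemma.

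The only substantive ingredient is the vanishing $H^1(A,L)=0$, which is standard for ample line bundles on an abelian surface; the rest is pure diagram chasing, so I do not expect any real obstacle.
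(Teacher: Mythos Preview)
Your argument is correct and follows essentially the same snake-lemma strategy as the paper: both set up a diagram comparing the multiplication maps via the section $\sigma$ and the exact sequences it induces, then read off the equality of cokernels. The only difference is cosmetic---the paper works with $H^0(A,L)^{\otimes 2}$ and $W^{\otimes 2}$ (so the left vertical map is merely surjective), whereas your use of $\operatorname{Sym}^2$ makes the left vertical map the identity and incidentally yields $\ker m_A\cong \ker m_C$ as a bonus.
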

\begin{proof}
 We first observe that in the statement we can replace $\operatorname{Sym}^2 W$ and $\operatorname{Sym}^2 H^0(A,L)$ with $W^{\otimes 2}$ and $H^0(A,L)^{\otimes 2}$ respectively. We take again a section $\sigma \in H^0(A,L)$ such that $C=\{ \sigma = 0 \}$: then Lemma \ref{Wasrestriction} gives the exact sequence
\begin{equation}\label{exactsequenceW} 
0 \lra \CC\sigma \lra H^0(A,L) \lra W \lra 0.
\end{equation}
Instead, if we take global sections in the exact sequence
\begin{equation}
0 \lra L \overset{\cdot \sigma}{\lra} L^2 \lra \omega_C^2 \lra 0
\end{equation}
and we use that $H^1(A,L)=0$ by Kodaira vanishing, we get an exact sequence
\begin{equation}\label{exactsequenceL} 
0 \lra H^0(A,L)\sigma \lra H^0(A,L^2) \lra H^0(C,\omega_C^2) \lra 0.
\end{equation}
Putting together \eqref{exactsequenceW} and \eqref{exactsequenceL}, we get a commutative diagram with exact rows
\begin{equation}
\begin{tikzcd}
0 \arrow{r}{} & \sigma\otimes H^0(A,L) + H^0(A,L)\otimes \sigma \arrow{r}{}\arrow{d}{} & H(A,L)^{\otimes 2} \arrow{r}{}\arrow{d}{} & W^{\otimes 2} \arrow{r}{}\arrow{d}{} & 0 \\
0 \arrow{r}{} & H^0(A,L)\sigma \arrow{r}{} & H^0(A,L^2) \arrow{r}{} & H^0(C,\omega_C^2) \arrow{r}{} & 0
\end{tikzcd}
\end{equation}
Since the map
\begin{equation}
\sigma\otimes H^0(A,L) + H^0(A,L)\otimes \sigma \lra H^0(A,L)\sigma 
\end{equation}
is clearly surjective, the Snake Lemma proves that
\begin{equation}
\operatorname{Coker}\left( H^0(A,L)^{\otimes 2} \lra H^0(A,L^2) \right) \cong \operatorname{Coker}\left( W^{\otimes 2} \lra H^0(C,\omega_C^2) \right)
\end{equation}
which implies the statement. 	
\end{proof}

Now, as outlined in the introduction, we can prove Theorems A and B, when the cover is not bielliptic.

\begin{corollary}\label{cor:notbielliptic}
	Suppose that $d\geq 6$ and that the cover $C\lra D$ is not bielliptic. Then the differential of the Prym map \eqref{eq:prymmap} is injective at $[D,\eta]$. Moreover, if $d\geq 7$, the curve $C$ is not hyperelliptic.
\end{corollary}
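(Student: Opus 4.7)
My strategy would be to translate the codifferential of the Prym map into a multiplication map on the abelian surface $A$ via Lemma \ref{differentialprym2}, and then to apply the syzygy results for polarized abelian surfaces of type $(1,d)$ due to Ramanan, Lazarsfeld, and Fuentes Garc\'ia; the non-hyperelliptic statement will fall out of the very ampleness of $L$.

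For the injectivity of the Prym differential when $d \geq 7$: by Lemma \ref{differentialprym2}, the map $\operatorname{Sym}^2 W \to H^0(C, \omega_C^2)$ is surjective if and only if the multiplication map $\mu \colon \operatorname{Sym}^2 H^0(A, L) \to H^0(A, L^2)$ is surjective. Since both maps are $(\mathbb{Z}/d\mathbb{Z})$-equivariant and taking invariants is exact over $\mathbb{C}$, surjectivity of $\mu$ forces surjectivity of $(\operatorname{Sym}^2 W)^{\mathbb{Z}/d\mathbb{Z}} \to H^0(D, \omega_D^2)$, which by Remark \ref{rmk:invariantW} is exactly the injectivity of the Prym differential. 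Now Ramanan's theorem says that the $(1,d)$-polarization $L$ is very ample precisely when the cover $C \to D$ is not bielliptic, while the combined results of Lazarsfeld and Fuentes Garc\'ia say that $\mu$ is surjective as soon as $d \geq 7$ and $L$ is very ample; chaining these gives the statement for $d \geq 7$.

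For the remaining case $d = 6$, $\mu$ will in general fail to be surjective, but the same circle of results should pin down $\operatorname{Coker}(\mu)$ precisely. The plan is to verify that this cokernel sits in a non-trivial $(\mathbb{Z}/6\mathbb{Z})$-isotypic component, so that it is annihilated by the projection to invariants and the composition $(\operatorname{Sym}^2 W)^{\mathbb{Z}/6\mathbb{Z}} \to H^0(D, \omega_D^2)$ stays surjective. I expect this to be the main obstacle, since it requires quantitative information on $\operatorname{Coker}(\mu)$ rather than just pure vanishing.

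Finally, for the non-hyperelliptic statement with $d \geq 7$: since the cover is not bielliptic, Ramanan's criterion gives that $L$ is very ample, and so $A \hookrightarrow \PP(H^0(A,L)^\vee)$ is a closed embedding, which restricts to a closed embedding of $C \subset A$. By Lemma \ref{Wasrestriction}, the linear system defining this restricted embedding is exactly $W \subseteq H^0(C, \omega_C)$, so $W$ separates points and tangents on $C$. As $W$ is a subseries of the canonical series $|\omega_C|$, the canonical map $\phi_{\omega_C}$ must also separate points and tangents --- if it identified two points or tangents then $\phi_W$, which factors through $\phi_{\omega_C}$ via a linear projection, would do so too, contradicting that $\phi_W$ is an embedding. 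Hence $\phi_{\omega_C}$ is an embedding and $C$ is not hyperelliptic.
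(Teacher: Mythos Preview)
Your argument for $d\geq 7$ (Prym injectivity) is essentially the paper's: translate to the abelian surface via Lemma~\ref{differentialprym2}, then invoke Ramanan for very ampleness and Lazarsfeld--Fuentes Garc\'ia for surjectivity of $\mu$.

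The $d=6$ case is where you have a genuine gap. You propose to show that $\operatorname{Coker}(\mu)$ lives entirely in non-trivial $(\mathbb{Z}/6\mathbb{Z})$-isotypic components, but you do not carry this out, and you flag it yourself as ``the main obstacle''. The paper's route avoids analyzing the isotypic decomposition of the cokernel altogether: a direct count using the decomposition in Remark~\ref{rmk:invariantW} gives $\dim(\operatorname{Sym}^2 W)^{\mathbb{Z}/6\mathbb{Z}} = 3 = \dim H^0(D,\omega_D^2)$, so surjectivity is equivalent to \emph{injectivity}. Injectivity of the invariant piece follows from injectivity of the full map $\operatorname{Sym}^2 W \to H^0(C,\omega_C^2)$, and by the cokernel comparison in Lemma~\ref{differentialprym2} this is equivalent to injectivity of $\operatorname{Sym}^2 H^0(A,L)\to H^0(A,L^2)$. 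For $(1,6)$-polarizations with $L$ very ample this injectivity is a result of Fuentes Garc\'ia, and very ampleness again comes from Ramanan. Note that your proposed approach and the paper's are in fact equivalent by exactness of taking invariants: the cokernel has no invariants if and only if the map on invariants is surjective, i.e.\ (by the dimension count) injective. What you are missing is precisely this dimension coincidence that converts the problem into an injectivity statement on the abelian surface.

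Your non-hyperellipticity argument is correct and genuinely different from the paper's. The paper deduces surjectivity of $\operatorname{Sym}^2 H^0(C,\omega_C)\to H^0(C,\omega_C^2)$ from that of $\operatorname{Sym}^2 W\to H^0(C,\omega_C^2)$ and then quotes Noether's theorem. You instead use very ampleness of $L$ directly: the embedding $A\hookrightarrow \PP(H^0(A,L)^\vee)$ restricts to an embedding of $C$, which (after noting that the image lies in the hyperplane $\{\sigma=0\}$) is exactly the map $\phi_W$; since $\phi_{\omega_C}$ followed by a linear projection gives $\phi_W$, the canonical map must itself be an embedding. Both arguments are short; yours is perhaps more geometric, while the paper's ties in more cleanly with the surjectivity statement already established for the Prym differential.
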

\begin{proof}
Let's suppose first that $d\geq 7$. Then for both statements, it is enough to prove that the multiplication map 
\begin{equation}\label{eq:mapm} 
m\colon \operatorname{Sym}^2 W \lra H^0(C,\omega_C^2)
\end{equation}  
is surjective. Indeed, since the projection map $(-)^{\mathbb{Z}/d\mathbb{Z}}\colon H^0(C,\omega_C^2) {\to} H^0(D,\omega_D^2)$ is always surjective, we would get that the composition in Proposition \ref{differentialprym} is surjective as well, meaning that the differential of the Prym map is injective at $[D,\eta]$. Furthermore, if \eqref{eq:mapm} is surjective, the multiplication map $\operatorname{Sym}^2 H^0(C,\omega_C) \to H^0(C,\omega_C^2)$ must be a fortiori surjective, and since $C$ has genus $d+1\geq 8$, Noether's theorem shows that the curve $C$ cannot be hyperelliptic.
 
To prove that \eqref{eq:mapm} is surjective, we can use Lemma \ref{differentialprym2} and show that $\operatorname{Sym}^2 H^0(A,L) \to H^0(A,L^2)$ is surjective, where $(A,L)$ is the polarized abelian surface which corresponds to $(C,\eta)$, as in Lemma \ref{coversabelian}. By results of \cite{LazarsfeldProjective1990} and Fuentes Garc\'ia \cite{GarciaProjective2004}, this happens if and only if $d\geq 7$ and the line bundle $L$ is very ample. To conclude, a theorem of Ramanan  \cite[Theorem 3.1]{RamananAmpleAbSurf1985} shows that $L$ fails to be very ample if and only if the original cover $C\to D$ is bielliptic.

Now let's suppose that $d=6$ and that the cover $C\to D$ is not bielliptic. According to Remark \ref{rmk:invariantW}, we need to prove that the multiplication map $(\operatorname{Sym}^2 W)^{(\mathbb{Z}/6\mathbb{Z})} \to H^0(D,\omega_D^2)$ is surjective. However, using the decomposition in Remark \ref{rmk:invariantW}, it is easy to see that $\dim  (\operatorname{Sym}^2 W)^{(\mathbb{Z}/6\mathbb{Z})} = 3 = \dim H^0(D,\omega_D^2)$, so that the map is surjective if and only if it is injective.	In particular, it suffices to prove that the map $\operatorname{Sym}^2 W \to H^0(C,\omega_C^2)$ is injective. We can interpret this as a statement on the dimension of the cokernel and then using Lemma \ref{differentialprym2} and some short computations, we see that this is equivalent to the injectivity of $\operatorname{Sym}^2 H^0(A,L) \to H^0(A,L^2)$ on the abelian surface $A$. However, since the cover is not bielliptic, Ramanan's theorem \cite[Theorem 3.1]{RamananAmpleAbSurf1985} proves that the line bundle $L$ is very ample, and then a result of Fuentes Garc\'ia \cite[Theorem 5.3]{GarciaProjectiveArxiv2003} shows that the map $\operatorname{Sym}^2 H^0(A,L) \to H^0(A,L^2)$ is injective.
\end{proof}

\section{The bielliptic case}

To conclude, we need to study what happens in the bielliptic case. Recall that in this case the cyclic cover $f\colon C\to D$ fits into a fibered square
\begin{equation}\label{eq:biellipticsquare}
\begin{tikzcd}
C \arrow{r}{}\arrow{d}{f} & E \arrow{d}{} \\
D \arrow{r}{g} & F
\end{tikzcd}
\end{equation}
where $E$ and $F$ are elliptic curves, the horizontal maps are double covers and the vertical maps are {\'e}tale cyclic covers of degree $d$. Equivalently, this means that the  line bundle $\eta$ on $D$ is the pullback $\eta=g^*\eta_F$ of a line bundle of order exactly $d$ on $F$. 

In particular, the covering curve $C$ is itself bielliptic, and this concludes the proof of Theorem B.

\begin{proof}[Proof of Theorem B]
	Let $C\to D$ be an {\'e}tale cyclic cover of degree $d\geq 7$, where $D$ is a smooth curve of genus two. Then Corollary \ref{cor:notbielliptic} shows that $C$ is not hyperelliptic whenever the cover is not bielliptic. If instead the cover is bielliptic, then the curve $C$ itself is bielliptic of genus $d+1\geq 8$ and it is a consequence of the Castelnuovo-Severi inequality \cite[Corollary p. 26]{KaniCastelnuovo1984}, that no bielliptic curve of genus at least $4$ can be hyperelliptic.
\end{proof}

In the rest of the section, we conclude the proof of Theorem A. More precisely, we need to prove that for a bielliptic cover, the differential of the Prym map is not injective. Equivalently, via Remark \ref{rmk:invariantW} we need to prove that the multiplication map
\begin{equation}\label{eq:multmapW}
\bigoplus_{\substack{1\leq i \leq j \leq d-1 \\ i+j=d }} H^0(D,\omega_D\otimes \eta^i)\otimes H^0(D,\omega_D\otimes \eta^j) \lra H^0(D,\omega_D^2)
\end{equation}
is not surjective. We first set up some notation: since $D$ has genus two, the double cover $g\colon D\to F$ is branched over two points, and standard facts about double covers show that $f_*\mathcal{O}_D \cong \mathcal{O}_C \oplus \mathcal{O}_C(-\delta)$, for a certain divisor $\delta$ on $F$ of degree $1$. In particular, if $\mathcal{M}$ is any line bundle on $F$, then we have a decomposition $H^0(D,f^*\mathcal{M})=H^0(F,\mathcal{M})\oplus H^0(F,\mathcal{M}-\delta)$, which corresponds to the decomposition into invariant and anti-invariant parts, under the bielliptic involution that induces the cover $D\to F$. 

We now set $\mathcal{L}=\mathcal{O}_F(\delta)$: other standard facts on double covers show that $\omega_D \cong g^*\mathcal{L}$ and in particular
\begin{equation}
H^0(D,\omega_D^2) = H^0(D,g^*(\mathcal{L}^2)) \cong H^0(F,\mathcal{L}^2) \oplus H^0(F,\mathcal{L}^2-\delta) 
\end{equation} 
Furthermore, since the cover is bielliptic, we know that $\eta \cong g^*\eta_F$ by construction.  Hence, for every $i=1,\dots,d-1$ we have
\begin{equation} 
H^0(D,\omega_D\otimes \eta^i) = H^0(D,g^*(\mathcal{L}\otimes \eta_F^i)) \cong H^0(F,\mathcal{L}\otimes \eta_F^i) \oplus H^0(F,\mathcal{L}\otimes \eta_F^i(-\delta)) = H^0(F,\mathcal{L}\otimes \eta_F^i) .
\end{equation}
where the last equality comes from the fact that $H^0(F,\mathcal{L}\otimes \eta_F^i(-\delta)) \cong H^0(F,\eta_F^i) = 0$. In conclusion, we can reinterpret the map \eqref{eq:multmapW} as a  map
\begin{equation}
\bigoplus_{\substack{1\leq i \leq j \leq d-1 \\ i+j=d }} H^0(F,\mathcal{L}\otimes \eta_F^i)\otimes H^0(F,\mathcal{L}\otimes \eta_F^j) \lra H^0(F,\mathcal{L}^2)\oplus H^0(F,\mathcal{L}^2-\delta)
\end{equation}
At this point, we claim that the image of this map must be contained in the subspace $H^0(F,\mathcal{L}^2)$. One way to see this is to interpret the map in terms of multiplication of sections on the curve $F$. Alternatively, we can observe that the domain is invariant with respect to the bielliptic involution on $D$, and that the invariant part on the codomain is precisely $H^0(F,\mathcal{L}^2)$. To conclude, we simply observe  that $H^0(F,\mathcal{L}^2-\delta) \cong H^0(F,\mathcal{L}) \cong \mathbb{C}$ so that the map cannot be surjective.

This allows us to conclude the proof of Theorem A.

\begin{proof}[Proof of Theorem A]
	Let $D$ be a curve of genus two and let $C\to D$ be an {\'e}tale cyclic cover of degree $d\geq 6$. If the cover is not bielliptic Corollary \ref{cor:notbielliptic} asserts that the Prym map is unramified at $[C\to D]$. If instead the cover is bielliptic, then the discussion in this last section shows that the differential of the Prym map is not injective at $[C\to D]$ and this concludes the proof.
\end{proof}

\printbibliography

@Article{AtiyahKrullSchmidt1956,
  author  = {Michael F. Atiyah},
  title   = {On the {K}rull-{S}chmidt theorem with application to sheaves},
  journal = {Bulletin de la {SMF}},
  year    = {1956},
  volume  = {79},
  pages   = {307--317},
}

@InProceedings{BeauvillePrymSurvey1989,
  author    = {Arnaud Beauville},
  title     = {Prym varieties: a survey},
  booktitle = {Theta {F}unctions, {B}owdoin 1987},
  year      = {1989},
  editor    = {Leon Ehrenpreis and Robert C. Gunning},
  volume    = {49},
  number    = {2},
  series    = {Proceedings of Symposia in Pure Mathematics},
  pages     = {307--317},
  publisher = {American Mathematical Society},
}

@PhdThesis{BernsteinThesis1999,
  author    = {Mira Bernstein},
  title     = {Moduli of curves with level structure},
  school    = {Harvard University},
  year      = {1999},
  booktitle = {Theta Functions, Bowdoin 1987},
  editor    = {Leon Ehrenpreis and Robert C. Gunning},
}

@Book{BirkenhakeLange2004,
  title     = {Complex {A}belian {V}arieties},
  publisher = {Springer},
  year      = {2004},
  author    = {Christina Birkenhake and Herbert Lange},
}

@article {Borowka,
	AUTHOR = {Bor\'{o}wka, Pawe{\l} and  Ortega, Angela},
	TITLE = {Hyperelliptic curves on {$(1,4)$}-polarised abelian surfaces},
	JOURNAL = {Math. Z.},
	FJOURNAL = {Mathematische Zeitschrift},
	VOLUME = {292},
	YEAR = {2019},
	NUMBER = {1-2},
	PAGES = {193--209},
}

@article {ColomboetalShimura2019,
	author = {Colombo, Elisabetta and Frediani, Paola and Ghigi, Alessandro
	and Penegini, Matteo},
	TITLE = {Shimura curves in the {P}rym locus},
	JOURNAL = {Commun. {C}ontemp. {M}ath.},
	VOLUME = {21},
	YEAR = {2019},
}

@InProceedings{FarkasPrym2012,
  author    = {Gavril Farkas},
  title     = {Prym varieties and their moduli},
  booktitle = {Contributions to Algebraic Geometry, {IMPANGA} Lecture Notes},
  year      = {2012},
  editor    = {Piotr Pragacz},
  volume    = {49},
  number    = {2},
  series    = {EMS Series of Congress Reports},
}

@Article{GarciaProjectiveArxiv2003,
	author      = {Luis Fuentes Garcia},
	title       = {Projective normality of abelian surfaces of type (1,2d)},
	year        = {2003},
	eprint      = {math/0306058v2},
	eprintclass = {math.AG},
	eprinttype  = {arXiv},
}

@Article{GarciaProjective2004,
  author    = {Luis Fuentes Garc\'ia},
  title     = {Projective normality of abelian surfaces of type $(1,2d)$},
  journal   = {Manuscripta {M}ath.},
  year      = {2004},
  volume    = {114},
  number    = {3},
  pages     = {385--390},
}

@Article{LangeOrtegaPrym72016,
  author    = {Herbert Lange and Angela Ortega},
  title     = {The {P}rym map of degree-7 cyclic coverings},
  journal   = {Algebra {N}umber {T}heory},
  year      = {2016},
  volume    = {10},
  pages     = {771--801},
}

@Article{LangeOrtegaPrymCyclic2010,
  author    = {Herbert Lange and Angela Ortega},
  title     = {Prym varieties of cyclic coverings},
  journal   = {Geometriae {D}edicata},
  year      = {2010},
  volume    = {150},
  pages     = {391--403},
}

@Article{LangeOrtegaPrymHyp2018,
  author    = {Herbert Lange and Angela Ortega},
  title     = {Prym Varieties of {\'e}tale covers of hyperelliptic curves},
  journal   = {Ann. {S}cuola {N}orm. {S}up. {P}isa {C}l. {S}ci.},
  year      = {2018},
  volume    = {150},
  pages     = {391--403},
}

@Article{LazarsfeldProjective1990,
  author    = {Robert Lazarsfeld},
  title     = {Projectivit\'e normale des surfaces ab\'eliennes.},
  journal   = {Preprint {N}o. 14, {E}uroproj {CIMPA}},
  year      = {1990},
  note      = {Redige par O. Debarre.},
}

@Book{LazarsfeldPositivityI2004,
  title     = {Positivity in Algebraic Geometry {I}},
  publisher = {Springer},
  year      = {2004},
  author    = {Robert Lazarsfeld},
}

@article {MarcucciPirola2012,
	AUTHOR = {Marcucci, Valeria Ornella and Pirola, Gian Pietro},
	TITLE = {Generic {T}orelli theorem for {P}rym varieties of ramified
	coverings},
	JOURNAL = {Compos. {M}ath.},
	VOLUME = {148},
	YEAR = {2012},
	PAGES = {1147--1170},
}

@article {MarcucciNaranjo2014,
	AUTHOR = {Marcucci, Valeria Ornella and Naranjo, Juan Carlos},
	TITLE = {Prym varieties of double coverings of elliptic curves},
	JOURNAL = {Int. Math. Res. Not. {IMRN}},
	YEAR = {2014},
	PAGES = {1689--1698},
}

@Book{MumfordAbelianBook1974,
  title     = {Abelian {V}arieties},
  publisher = {Tata Institute of Fundamental Research},
  year      = {1974},
  author    = {David Mumford},
}

@article {NaranjoOrtegaVerra2019,
	author = {Naranjo, Juan Carlos and Ortega, Angela},
	title = {Generic injectivity of the {P}rym map for double ramified
	coverings},
	note = {With an appendix by Alessandro Verra},
	journal = {Trans. {A}mer. {M}ath. {S}oc.},
	volume = {371},
	year = {2019},
	pages = {3627--3646},
}

@Article{Ortega2003,
  author    = {Angela Ortega},
  title     = {Vari{\'e}t{\'e}s de {P}rym associ{\'e}es aux rev{\^e}tements n-cycliques d' une courbe hyperelliptique},
  journal   = {Math. {Z}.},
  year      = {2003},
  volume    = {245},
  pages     = {97--103},
}

@Article{RamananAmpleAbSurf1985,
  author    = {S. Ramanan},
  title     = {Ample {D}ivisors on {A}belian {S}urfaces},
  journal   = {Proc. {L}ondon {M}ath. {S}oc. (3)},
  year      = {1985},
  volume    = {51},
  pages     = {231--245},
}

@Article{KaniCastelnuovo1984,
	author = {Ernst Kani},
	journal = {Journal für die reine und angewandte Mathematik},
	pages = {24--70},
	title = {On {C}astelnuovo's equivalence defect.},
	volume = {352},
	year = {1984},
}

\end{document}